\newcommand*{\qed}{\hfill\ensuremath{\square}}%
\begin{document}

%

%

\twocolumn[

\aistatstitle{Optimal Approximation of Doubly Stochastic Matrices}

\aistatsauthor{ Nikitas Rontsis \And Paul J. Goulart }

\aistatsaddress{ Department of Engineering Science \\
University of Oxford, UK \\
nikitas.rontsis@eng.ox.ac.uk
\And Department of Engineering Science \\
University of Oxford, UK\\
paul.goulart@eng.ox.ac.uk}]

\begin{abstract}
  We consider the least-squares approximation of a matrix $C$ in the set of doubly stochastic matrices with the same sparsity pattern as $C$. Our approach is based on applying the well-known Alternating Direction Method of Multipliers (ADMM) to a reformulation of the original problem. Our resulting algorithm requires an initial Cholesky factorization of a positive definite matrix that has the same sparsity pattern as $C + I$ followed by simple iterations whose complexity is linear in the number of nonzeros in C, thus ensuring excellent scalability and speed. We demonstrate the advantages of our approach in a series of experiments on problems with up to 82 million nonzeros; these include normalizing large scale matrices arising from the 3D structure of the human genome, clustering applications, and the SuiteSparse matrix library. Overall, our experiments illustrate the outstanding scalability of our algorithm; matrices with millions of nonzeros can be approximated in a few seconds on modest desktop computing hardware.
\end{abstract}

\section{INTRODUCTION}
Consider the following optimization problem
\begin{equation}\label{eqn:approximation_problem} \tag{$\mathcal{P}$}
  \begin{array}{ll}
    \mbox{minimize}   & \frac{1}{2}\norm{X - C}^2 \\
    \mbox{subject to} & X \; \mbox{nonnegative} \\
                      & X_{i,j} = 0 \; \forall \; (i,j) \; \mbox{with} \; C_{i,j} = 0 \\
                      & X \mathbf{1} = \mathbf{1}, \;\;X^T \mathbf{1} = \mathbf{1}, \\
 \end{array}
\end{equation}
which approximates the symmetric real, $n \times n$ matrix $C$
in the {Frobenius norm} by a \emph{doubly stochastic} matrix $X \in \mathbb{R}^{n \times n}$, i.e.,~a matrix with nonnegative elements whose columns and rows sum to one, that has the same sparsity pattern as $C$.
Problem \ref{eqn:approximation_problem} is a \emph{matrix nearness} problem, i.e.,~a problem of finding a matrix with certain properties that is close to some given matrix; see \cite{Higham1989} for a survey on matrix nearness problems. 

Adjusting a matrix so that it becomes doubly stochastic is relevant in many fields, e.g., for preconditioning linear systems \citep{Knight2008}, as a normalization tool used in spectral clustering \citep{Zass2007}, and image filtering \citep{Milanfar2013}, or as a tool to estimate a doubly stochastic matrix from incomplete or approximate data used e.g.,~in longitudinal studies in life sciences \citep{Diggle2002}, or to analyze the 3D structure of the human genome \citep{Rao2014}

A related and widely used approach is to search for a diagonal matrix $D$ such that $DCD$ is doubly stochastic. This is commonly referred to as the \emph{matrix balancing} problem \citep{Knight2008}. Such a scaling matrix $D$ exists and is unique whenever $C$ has total support.
Perhaps surprisingly, when $C$ has only nonnegative elements, then the matrix balancing problem can be considered as a matrix nearness problem. This is because, $DXD$ has been shown to minimize the relative entropy measure \citep[Observation 3.19]{Idel2016}, i.e.,~it is the solution of the following convex problem
\begin{equation}\label{eqn:matrix_balancing}
  \begin{array}{ll}
    \mbox{minimize}   & \sum_{i, j} X_{i, j} \log {X_{i, j}}/{C_{i, j}}\\
    \mbox{subject to} & X \; \mbox{doubly stochastic} \\
                      & X_{i,j} = 0 \; \forall \; (i,j) \; \mbox{with} \; C_{i,j} = 0,
 \end{array}
\end{equation}
where we define $0 \cdot log(0) = 0$, $0 \cdot log(0/0) = 0$, and $1 \cdot log(1/0) = \infty$. Note, however, that the relative entropy is not a proper distance metric since, it is not symmetric, does not satisfy the triangular inequality and can take the value $\infty$.

The \emph{matrix balancing problem} can be solved by iterative methods with remarkable scalability. Standard and simple iterative methods exist that exhibit linear per-iteration complexity w.r.t.\ the number of nonzeros in $C$ and linear convergence rate \citep{Knight2008}, \citep{Idel2016}. More recent algorithms can exhibit a super-linear convergence rate and increased robustness in many practical situations \citep{Knight2013}.

The aim of the paper is to show that the direct minimization of a least squares objective in doubly stochastic approximation, which has a long history dating back to the influential paper of \citep{Deming1940}\footnote{\cite{Deming1940} consider the weighted least squares cost $\frac{1}{2}\sum_{i, j}[X_{i, j} - C_{i, j}]^2/C_{i, j}$ which we treat in \S \ref{subsubsec:generalizations}.}, can also by solved efficiently. This gives practitioners a new solution to a very important problem of doubly stochastic matrix approximation,  which might prove useful for cases where the relative entropy metric is not suitable to their problem.

The approach we present is also flexible in the sense that it can handle other interesting cases, for example, where $C$ is non-symmetric or rectangular, $\norm{\cdot}$ is a weighted Frobenius norm, and $X \mathbf{1}$ and $X^T \mathbf{1}$ are required to sum to an arbitrary, given vector. We discuss these generalizations in \S \ref{subsubsec:generalizations}.

\paragraph{Related work} \cite{Zass2007} consider the problem \ref{eqn:approximation_problem} in the case when $C$ is fully dense. They suggest an alternating projections algorithm that has linear per-iteration complexity. The approach of \cite{Zass2007} resembles the results of \S \ref{subsubsec:generalizations}, but as we will see in the experimental section, it is not guaranteed to converge to an optimizer. 

The paper is organized as follows: In Section \ref{sec:modeling}, we introduce a series of reformulations to Problem \ref{eqn:matrix_balancing}, resulting in a problem that is much easier to solve.In Section \ref{sec:admm}, we suggest a  solution method that reveals a particular structure in the problem. Finally, in Section \ref{sec:results}, we present a series of numerical results that highlight the scalability of the approach.

\paragraph{Notation used} The symbol $\otimes$ denotes the Kronecker product, $\vc(\cdot)$ the operator that stacks a matrix into a vector in a column-wise fashion and $\mat(\cdot)$ the inverse operator to $\vc(\cdot)$ (see \citep[12.3.4]{Golub2013} for a rigorous definition). Given two matrices, or vectors, with equal dimensions $\odot$ denotes the Hadamard (element-wise) product. $\norm{\cdot}$ denotes the 2-norm of a vector and the Frobenius norm of a matrix, while $\mathrm{card}(\cdot)$ the number of nonzero elements in a vector or a matrix. Finally $\epsilon_{\text{p}}$ denotes the machine precision.

\section{MODELING \ref{eqn:approximation_problem} EFFICIENTLY} \label{sec:modeling}
In this section, we present a reformulation of the doubly stochastic approximation problem \ref{eqn:approximation_problem} suitable for solving large-scale problems. One of the difficulties with the original formulation, \ref{eqn:approximation_problem}, is that it has $n^2$ variables and $2n^2 + 2n$ constraints. Attempting to solve \ref{eqn:approximation_problem} with an off-the-shelf QP solver, such as Gurobi \citep{gurobi}, can result in out-of-memory issues just by representing the problem's data even for medium-sized problems.

In order to avoid this issue, we will perform a series of reformulations that will eliminate variables from \ref{eqn:approximation_problem}. The final problem, i.e.,~\ref{eqn:approximation_problem_symm_reduced}, will have significantly fewer variables and constraints while maintaining a remarkable degree of structure, which will be revealed and exploited in the next section.

We first take the obvious step of eliminating all variables in \ref{eqn:approximation_problem} that are constrained to be zero, as prescribed by the constraint
\begin{equation}
  X_{i,j} = 0 \; \mbox{for all} \; (i,j) \; \mbox{such that} \; C_{i,j} = 0.
\end{equation}
Indeed, consider $\vc(\cdot)$, the operator that stacks a matrix into a vector in a column-wise fashion and $H$ an $n_{\textrm{nz}} \times n^2$ matrix, where $n_{\textrm{nz}} \eqdef \text{card}(C)$, that selects all the nonzero elements of $\vc(C)$. Note that $H$ depends on the sparsity pattern $S$ of $C$, defined as the $0-1$ $n \times n$ matrix
\begin{equation}
  S_{i,j} \eqdef
  \begin{cases}
    0 & C_{i,j} = 0 \\
    1 & \text{otherwise},
  \end{cases}
\end{equation}
but we will not denote this dependence explicitly. We can now isolate the nonzero variables contained in $X$ and $C$ in $n_{\textrm{nz}}-$dimensional vectors defined as
\begin{equation}
  x \eqdef H \vc(X), \quad c \eqdef H \vc(C).
\end{equation}
Note that $x$ is simply a re-writing of $X$ in $\mathbb{R}^{n_{\textrm{nz}}}$, since for any $X \in \mathcal{S} \eqdef \set{X \in \mathbb{R}^{n \times n}}{X_{i,j} = 0 \; \mbox{for all} \; C_{i,j} = 0}$ we have
\begin{equation*}
  H^T H \vc(X) = \vc(X) \Leftrightarrow H^T x = \vc(X),
\end{equation*}
due to the fact that $H^T H = \diag(\vc(S))$
Thus every $x$ defines a unique $X \in \mathcal{S}$ and vice versa.

We can now describe the constraints of \ref{eqn:approximation_problem}, i.e.,~$X \geq 0$, $X\mathbf{1}_n = \mathbf{1}_n$ and $X^T\mathbf{1}_n = \mathbf{1}_n$, on the ``$x-$space''. Obviously $X \geq 0$ trivially maps to $x \geq 0$. Furthermore, recalling the standard Kronecker product identity
\begin{equation} \label{eqn:kronecker_identity}
  \vc(LMN) = (N^T \otimes L)\vc(M)
\end{equation}
 for matrices of compatible dimension, the constraints $X \mathbf{1}_n = \mathbf{1}_n$ and $X^T \mathbf{1}_n = \mathbf{1}_n$ of \ref{eqn:approximation_problem} can be rewritten in an equivalent vectorized form as
\begin{equation} \label{eqn:vectorized_constraints}
  \begin{bmatrix}
  \mathbf{1}_n^T \otimes I_n \\
  I_n \otimes \mathbf{1}_n^T
  \end{bmatrix}
  \vc(X) = \mathbf{1}_{2n}
\end{equation}
or, by noting that $H ^T x = \vc(X)$, as
\begin{equation} \label{eqn:constraints_in_x}
  \begin{bmatrix}
  \mathbf{1}_n^T \otimes I_n \\
  I_n \otimes \mathbf{1}_n^T
  \end{bmatrix}
  H^T x = \mathbf{1}_{2n}.
\end{equation}
Thus \ref{eqn:approximation_problem} can be rewritten in the ``$x-$space'' as
\begin{equation}\label{eqn:approximation_problem_reduced} \tag{$\mathcal{P}_1$}
  \begin{array}{ll}
    \mbox{minimize}   & \frac{1}{2}\norm{x - c}^2 \\
    \mbox{subject to} & x \geq 0 \\
    & \begin{bmatrix}
  \mathbf{1}_n^T \otimes I_n \\
  I_n \otimes \mathbf{1}_n^T
  \end{bmatrix}
  H^T x = \mathbf{1}_{2n}.
 \end{array}
\end{equation}

A further reduction of the variables and constraints of \ref{eqn:approximation_problem_reduced} can be achieved by exploiting the symmetry of $C$. To this end, note that when $C$ is symmetric the optimal doubly stochastic approximation will also be symmetric according to the following proposition:
  \begin{proposition} \label{prop:symmetry}
    If $C$ is symmetric, then the optimal solution $X^*$ of \ref{eqn:approximation_problem} is also symmetric.
  \end{proposition}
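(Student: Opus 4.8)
The plan is to exploit strict convexity of the least-squares objective together with an obvious transpose symmetry of the whole problem. First I would note that $(\mathcal{P})$ minimizes $\frac{1}{2}\norm{X - C}^2$, a strictly convex function of $X$, over the feasible set
\[
  \mathcal{F} \eqdef \set{X \in \mathbb{R}^{n \times n}}{X \geq 0,\; X_{i,j} = 0 \text{ whenever } C_{i,j} = 0,\; X\mathbf{1} = \mathbf{1},\; X^T\mathbf{1} = \mathbf{1}},
\]
which is closed and convex. Under the standing assumption that $(\mathcal{P})$ is feasible (i.e.\ $\mathcal{F} \neq \emptyset$, which holds e.g.\ whenever $C$ has total support), strict convexity of the objective on the convex set $\mathcal{F}$ guarantees that $(\mathcal{P})$ admits a \emph{unique} minimizer $X^*$.

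Next I would show that, when $C$ is symmetric, the problem is invariant under the transpose map $X \mapsto X^T$. For the feasible set: if $X \in \mathcal{F}$ then $X^T \geq 0$; since $C = C^T$ we have $C_{i,j} = 0 \Leftrightarrow C_{j,i} = 0$, so $X^T$ has the same sparsity pattern; and the two equality constraints $X\mathbf{1} = \mathbf{1}$ and $X^T\mathbf{1} = \mathbf{1}$ are simply interchanged, hence both still hold for $X^T$. Thus $X^T \in \mathcal{F}$. For the objective, using $C = C^T$ and transpose-invariance of the Frobenius norm, $\norm{X^T - C}^2 = \norm{X^T - C^T}^2 = \norm{(X - C)^T}^2 = \norm{X - C}^2$.

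Combining these two facts: $(X^*)^T \in \mathcal{F}$ and attains the same (optimal) objective value as $X^*$, so $(X^*)^T$ is also a minimizer of $(\mathcal{P})$. By uniqueness of the minimizer, $X^* = (X^*)^T$, i.e.\ $X^*$ is symmetric. A variant that sidesteps uniqueness in the last step is to take any optimizer $X^*$, form its symmetrization $\bar{X} \eqdef \frac{1}{2}(X^* + (X^*)^T) \in \mathcal{F}$, and use strict convexity: $\frac{1}{2}\norm{\bar{X} - C}^2 \leq \frac{1}{4}\big(\norm{X^* - C}^2 + \norm{(X^*)^T - C}^2\big)$ with equality only if $X^* = (X^*)^T$; since the right-hand side equals the optimal value, equality is forced.

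I do not expect a genuine obstacle here. The only points requiring a word of care are the implicit feasibility hypothesis (so that a minimizer exists) and the bookkeeping observation that it is precisely the \emph{pair} of constraints $X\mathbf{1} = \mathbf{1}$, $X^T\mathbf{1} = \mathbf{1}$ — together with the symmetry of the sparsity pattern induced by $C = C^T$ — that makes $\mathcal{F}$ invariant under transposition.
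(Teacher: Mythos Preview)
Your proposal is correct and essentially matches the paper's own argument: the paper assumes $X^*$ is asymmetric, observes that $(X^*)^T$ is feasible with the same objective value, forms the symmetrization $\tilde X = \tfrac{1}{2}(X^* + (X^*)^T)$, and invokes strict convexity to derive a contradiction --- exactly your ``variant'' at the end. Your primary argument via uniqueness of the minimizer is a cosmetic repackaging of the same two ingredients (transpose-invariance of $\mathcal{F}$ and the objective, plus strict convexity), so there is no substantive difference.
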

  \begin{proof}
    Assume the contrary, so that $X^*$ is optimal but asymmetric.   Then the matrix ${X^*}^T$ is a feasible solution for \ref{eqn:approximation_problem} since it remains element-wise negative when its row and column sums are exchanged, and has an identical objective value since $C$ is assumed symmetric.   Then define the symmetric matrix $\tilde X$ as the convex combination
    \[
    \tilde X := \frac{1}{2}(X^* + {X^*}^T)
    \]
    which is also a feasible point for \ref{eqn:approximation_problem}. Since the objective function is strictly convex (at least on the subset of elements of X not constrained to be zero), the objective function evaluated at $\tilde X$ will be strictly lower than that for both $X^*$, contradicting the optimality of $X^*$. \qed
  \end{proof}
  It follows that restricting the feasible set of \ref{eqn:approximation_problem} to symmetric matrices does not affect its solution. We will exploit this by eliminating all the  variables embedded into $X$ that are below its main diagonal. Define an upper triangular matrix $X_u$ consisting of scaled elements of $X$ such that $X = X_u+X_u^T$, and likewise for $C$, i.e.
  \begin{equation} \label{eqn:x_upper_triangular}
    X_u \eqdef U \odot X, \quad C_u \eqdef U \odot C
  \end{equation}
where
\begin{equation}
    U \eqdef
    \begin{bmatrix}
      \frac{1}{2} & 1 & \cdots &  1 \\
      & \ddots  & & \vdots \\
       & & \frac{1}{2} & 1 \\
      0 & & & \frac{1}{2}
    \end{bmatrix}.
  \end{equation}
  As in the previous definitions, define $H_u$ as the matrix that stacks all the nonzeros of $C_u$ in an column-wise fashion, which is used to extract the nonzero elements of $X_u$ and $C_u$, i.e.,~scaled nonzero elements of the upper triangular part of $X$,  to the vectors
  \begin{equation} \label{eqn:upper_triangular_vectors}
    x_u \eqdef H_u\vc(X_u), \quad c_u \eqdef H_u \vc(X_u).
  \end{equation}

  We can now write down our reduced optimization problem. Note that, although it might not be directly evident, the following problem possesses a remarkable degree of internal structure that is exploited in the suggested solution algorithm of the next section.
  \begin{theorem} \label{thm:final_problem}
    Solving \ref{eqn:approximation_problem} for a symmetric $C$ is equivalent to solving
    \begin{equation}\label{eqn:approximation_problem_symm_reduced} \tag{$\mathcal{P}_2$}
      \begin{array}{ll}
        \mbox{minimize}   & \frac{1}{2}\norm{p\odot(x_u - c_u)}^2\\
        \mbox{subject to} & x_u \geq 0 \\
        & A x_u = \mathbf{1}_{n},
      \end{array}
    \end{equation}
    where
    \begin{equation*}
      p \eqdef H_u\vc\left(\begin{bmatrix}
        2 & \sqrt{2} & \cdots & \sqrt{2} \\
        & \ddots  & & \vdots \\
         & & 2 & \sqrt{2} \\
        0 & & & 2
      \end{bmatrix}\right)
    \end{equation*}
    \begin{equation*}
    A_1 \eqdef
    \mathbf{1}_{n}^T \otimes I_n, \quad
    A_2 \eqdef
    I_n \otimes \mathbf{1}_{n}^T
  \end{equation*}
  and $A \eqdef (A_1 + A_2) H_u^T$,
  in the sense that \ref{eqn:approximation_problem} is feasible iff \ref{eqn:approximation_problem_symm_reduced} is, and the optimizer $X^*$ of \ref{eqn:approximation_problem} can be constructed from the optimizer $x_u^*$ of \ref{eqn:approximation_problem_symm_reduced} using \eqref{eqn:upper_triangular_vectors} and \eqref{eqn:x_upper_triangular}.
  \end{theorem}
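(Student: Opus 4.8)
The plan is to exhibit an explicit, invertible correspondence between feasible points of \ref{eqn:approximation_problem} and feasible points of \ref{eqn:approximation_problem_symm_reduced} that preserves the objective value, so that the two problems share the same optimal value and their minimizers are related by the stated formulas. I would proceed in three stages: first reduce to symmetric matrices, then pass to the upper-triangular representation, and finally check that the vectorized row/column-sum constraint collapses to $A x_u = \mathbf{1}_n$ and that the objective transforms into the weighted form with weight vector $p$.

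First I would invoke Proposition~\ref{prop:symmetry} to restrict attention to symmetric $X \in \mathcal{S}$; this is legitimate because the proposition shows the optimizer of \ref{eqn:approximation_problem} is symmetric, and of course any symmetric feasible $X$ is in particular feasible for \ref{eqn:approximation_problem}, so restricting the feasible set leaves the optimal value unchanged. For such $X$, the decomposition $X = X_u + X_u^T$ with $X_u \eqdef U \odot X$ is a bijection between symmetric matrices in $\mathcal{S}$ and upper-triangular matrices with the sparsity pattern of $C_u$; composing with $H_u$ gives a bijection onto vectors $x_u \in \mathbb{R}^{\mathrm{card}(C_u)}$, with inverse $X = X_u + X_u^T$, $X_u = \mat(H_u^T x_u)$. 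The nonnegativity constraint $X \geq 0$ is equivalent to $x_u \geq 0$ since the off-diagonal entries of $X_u$ are just $X_{i,j}$ for $i<j$ and the diagonal entries are $\tfrac12 X_{i,i}$, all sharing the sign of the corresponding entries of $X$ (the diagonal is halved but the half is positive).

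Next I would rewrite the linear constraint. Starting from \eqref{eqn:vectorized_constraints}, since $X = X_u + X_u^T$ we have, for the row-sum block, $(\mathbf{1}_n^T \otimes I_n)\vc(X) = X\mathbf{1}_n = X_u \mathbf{1}_n + X_u^T\mathbf{1}_n$; using the Kronecker identity \eqref{eqn:kronecker_identity} this is $(\mathbf{1}_n^T \otimes I_n)\vc(X_u) + (I_n \otimes \mathbf{1}_n^T)\vc(X_u) = (A_1 + A_2)\vc(X_u)$, and likewise the column-sum block $X^T\mathbf{1}_n$ equals the same expression by symmetry of $X$. Hence the $2n$ constraints of \eqref{eqn:vectorized_constraints} reduce to the single block $(A_1+A_2)\vc(X_u) = \mathbf{1}_n$, and substituting $\vc(X_u) = H_u^T x_u$ (valid because $X_u$ has the sparsity pattern of $C_u$, so $H_u^T H_u \vc(X_u) = \vc(X_u)$, exactly as for $H$) yields $A x_u = \mathbf{1}_n$. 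For the objective, $\norm{X-C}^2 = \sum_{i,j}(X_{i,j}-C_{i,j})^2$; splitting the sum into diagonal and strictly-upper contributions and using $X_{i,j}-C_{i,j} = (X_u - C_u)_{i,j}$ off the diagonal and $= 2(X_u-C_u)_{i,i}$ on the diagonal, one gets $\norm{X-C}^2 = \sum_{i<j} 2\,[(X_u-C_u)_{i,j}]^2 \cdot 1 + \sum_i 4\,[(X_u-C_u)_{i,i}]^2$ — wait, the strictly-upper entry is counted twice in the full sum (once as $(i,j)$, once as $(j,i)$), giving the factor $2 = (\sqrt2)^2$, and the diagonal gives $4 = 2^2$; collecting, $\norm{X-C}^2 = \norm{p \odot (x_u - c_u)}^2$ with $p$ the vector of $\sqrt2$'s off-diagonal and $2$'s on-diagonal as stated. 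Thus $\tfrac12\norm{X-C}^2 = \tfrac12\norm{p\odot(x_u-c_u)}^2$.

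Combining: the map $X \mapsto x_u$ is a bijection between the feasible set of the symmetry-restricted version of \ref{eqn:approximation_problem} and the feasible set of \ref{eqn:approximation_problem_symm_reduced}, and it preserves the objective; so \ref{eqn:approximation_problem} is feasible iff \ref{eqn:approximation_problem_symm_reduced} is, the optimal values agree, and $x_u^*$ recovers $X^*$ via \eqref{eqn:x_upper_triangular}–\eqref{eqn:upper_triangular_vectors}. The main obstacle I anticipate is purely bookkeeping rather than conceptual: being careful with the factors of $\tfrac12$, $\sqrt2$ and $2$ introduced by $U$, by the double-counting of off-diagonal entries in the Frobenius norm, and by the fact that $A_1+A_2$ (not a stacked $2n\times\cdot$ matrix) appears because the row- and column-sum constraints become \emph{identical} after symmetrization — so that the constraint count genuinely drops from $2n$ to $n$. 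One should also note in passing that strict convexity of the reduced objective requires every diagonal of $p$ and every off-diagonal of $p$ to be nonzero, which holds by construction, so the reduced problem inherits a unique minimizer whenever it is feasible. \qed
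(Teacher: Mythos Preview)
Your proposal is correct and follows essentially the same approach as the paper: invoke Proposition~\ref{prop:symmetry} to restrict to symmetric $X$, set up the bijection $X \leftrightarrow x_u$ via $X_u = U \odot X$ and $H_u$, verify the row/column-sum constraints collapse to $Ax_u=\mathbf{1}_n$ through the Kronecker identity, and check the objective values agree. The only cosmetic difference is that the paper verifies the objective equality via a chain of Hadamard-product identities on the full $n\times n$ matrices, whereas you do the equivalent entrywise bookkeeping of the $\sqrt2$ and $2$ weights directly.
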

  \begin{proof}
  We will first show that every feasible $x_u$ of \ref{eqn:approximation_problem_symm_reduced} defines a feasible $X$ 
  for \ref{eqn:approximation_problem}, where $\vc(X_u) \eqdef H_u^T x_u$, with the same objective value.
  Similarly to $S$, define $S_u \in \mathbb{R}^{n\times n}$ as an $0-1$ matrix that represents the sparsity of $C$ and the upper triangular of $C$ respectively, i.e.
  \begin{equation*}
    S_{u_{i,j}} =
    \begin{cases}
      0 & C_{i,j} = 0, \text{ or } i < j\\
      1 & \text{otherwise}.
    \end{cases}
  \end{equation*}

  The equality of the objective value can be shown as follows:
  \begin{equation}
  \begin{aligned}
     &\norm{p\odot (x_u - c_u)} \\
    = &\norm{(\mathbf{1}_{n \times n}\sqrt{2} + (2 -\sqrt{2})I_{n}) \odot (X_u - C_u)} \\
    = &\norm{(\mathbf{1}_{n \times n}\sqrt{2} + (2 -\sqrt{2})I_{n}) \odot U \odot (X - C)} \\
    = &\norm{(\mathbf{1}_{n \times n}\sqrt{2} + (1 -\sqrt{2})I_{n}) \odot S_U \odot (X - C)} \\
    = &\norm{S \odot (X - C)} = \norm{X - C}.
  \end{aligned}\label{eqn:objective}
  \end{equation}
  Furthermore, similarly to \eqref{eqn:kronecker_identity}-\eqref{eqn:constraints_in_x}, we have
  \begin{equation}
    \begin{aligned}
    X\mathbf{1}_n &= (X_u + X_u^T)\mathbf{1}_n \\
    &= (\mathbf{1}_{n}^T \otimes I_n + I_n \otimes \mathbf{1}_{n}^T) \vc(X_u) \\
    &= (\mathbf{1}_{n}^T \otimes I_n + I_n \otimes \mathbf{1}_{n}^T) H_u^T x_u
    = A x_u,
  \end{aligned}
  \label{eqn:feasibility}
\end{equation}
  resulting in $X\mathbf{1}_n = \mathbf{1}_n$ due to the feasibility of $x_u$ for \ref{eqn:approximation_problem_symm_reduced}. Due to the symmetry of $X$ we also get $X^T\mathbf{1}_n$. Finally, $X$ is nonnegative construction  and has sparsity pattern $S$. Therefore $X$ is feasible for \ref{eqn:approximation_problem}.

  Likewise, following \eqref{eqn:objective}-\eqref{eqn:feasibility} in reverse order, we can show that every symmetric feasible matrix $X$ of \ref{eqn:approximation_problem} defines an $x_u \eqdef H_u \vc(X_u)$, where $ X_u \eqdef U \odot X$, that is feasible for \ref{eqn:approximation_problem_symm_reduced} and has identical objective value. Since only symmetric optimizers exist for \ref{eqn:approximation_problem} (Lemma \ref{prop:symmetry}) this concludes the proof. \qed
  \end{proof}
  Unlike \ref{eqn:approximation_problem} which has $n^2$ and $2n$ constraints, \ref{eqn:approximation_problem_symm_reduced} has approximately $n_{\textrm{nz}}/2$ and $n$ constraints. Furthermore, it possesses a specific internal structure that we exploit in the solution algorithm presented in \S\ref{subsec:algorithm}.
  \section{SOLUTION METHOD} \label{sec:admm}
  In this section, we describe how the reduced problem \ref{eqn:approximation_problem_symm_reduced} can be solved with ADMM. We begin with a brief introduction to the ADMM algorithm in the general setting, which follows \citep{Boyd2011}, and then describe how ADMM can be applied efficiently for \ref{eqn:approximation_problem_symm_reduced}.

  \subsection{The Alternating Direction Method of Multipliers (ADMM) } \label{subsec:admm}
  Several optimization problems, including reformulations of \ref{eqn:approximation_problem_symm_reduced} \citep{Stellato2017}, are concerned with the minimization of a function $q$ that can be decomposed into two parts $q = f + g$ such that optimizing independently $f$ or $g$ is tractable. 
  Ideally, if $f$ and $g$ operate on disjoint variables, i.e.,~if $q(\chi, \psi) = f(\chi) + g(\psi)$, then $q$ can also be optimized efficiently by merely minimizing $q$ over $\chi$ and $\psi$ independently. However, it is often that case that there is some coupling between $\chi$ and $\psi$ which we assume to be in the form $A\chi + B \psi = d$, resulting in the following optimization problem
  \begin{equation}\label{eqn:splitting}
    \begin{array}{ll}
      \mbox{minimize}   & f(\chi) + g(\psi) \\
      \mbox{subject to} & A\chi + B\psi = d
   \end{array}
  \end{equation}
  where $\chi, \psi$ denote the decision variables, $f$, $g$ are proper lower-semicontinuous convex functions, and $A, B, d$ are matrices of appropriate dimensions.

  The Alternating Direction Method of Multipliers (ADMM) is a first-order (i.e.,~``gradient-based'') algorithm that solves \eqref{eqn:splitting} while exploiting the assumption that $f$ and $g$ can be easily optimized independently. ADMM is very closely related to performing (sub)gradient ascent to the dual of \eqref{eqn:splitting}, i.e.,~to the problem
  \begin{equation*}
    \begin{array}{ll}
      \mbox{maximize} & h(y) \eqdef \inf_{\chi, \psi}L(\chi, \psi, y),
    \end{array}
  \end{equation*}
  where $L(\chi, \psi, y) \eqdef f(\chi) + h(\psi) + y^T(A\chi + B\psi - d)$ is the Lagrangian of \eqref{eqn:splitting}, and $y$ is a Lagrange multiplier associated with the equality constraint of \eqref{eqn:splitting}. Thus dual ascent in merely the following iteration
  \begin{equation}
    y^{k+1} = y^k + \rho \nabla_y h(y^k),
  \end{equation}
  where $\rho > 0$ is a step size, and $\nabla_y h(y)$ is a subgradient of $h$ which can be evaluated as \citep[\S 2.1]{Boyd2011}
  \begin{equation*}
    \nabla_y h(y^{k+1}) = A\chi^{k+1} + B\psi^{k+1} - d,
  \end{equation*}
  where $(\chi^{k+1}, \psi^{k+1}) \eqdef \inf_{\chi, \psi}L(\chi, \psi, y^k)$,
  resulting in the following iterative scheme:
  \begin{align*}
    (\chi^{k+1}, \psi^{k+1}) &= \inf_{\chi, \psi}L(\chi, \psi, y^k)\\
    y^{k+1} &= y^{k} + \rho(A\chi^{k+1} + B\psi^{k+1} - d)
  \end{align*}
  One of the issues with dual ascent, apart from its weak convergence guarantees, is that $(\chi^{k+1}, \psi^{k+1}) = \inf_{\chi, \psi}L(\chi, \psi, y^k)$ might not be easily computable as it does not exploit the fact that $f$ and $g$ are easily optimized independently.
  To overcome this difficulty, ADMM \emph{approximates} the minimization of the Lagrangian over $(\chi^{k+1}, \psi^{k+1})$ with minimization first over $\chi^{k+1}$ and then $\psi^{k+1}$, independently. Separating the minimization over $\chi$ and $\psi$ into the above two steps is precisely what allows for the tractability of ADMM. Furthermore, ADMM introduces a penalty term to the Lagrangian $L$ resulting in the augmented Lagrangian
  \begin{align*}
    L_{\rho}(\chi, \psi, y) \eqdef f(\chi) + h(\psi) + y^T(A\chi + B\psi - d) \\ + (\rho/2)\norm{A\chi + B\psi - d}^2,
  \end{align*}
  which extends the convergence properties of ADMM.
  The resulting algorithm iterates as
  \begin{align*}
    \chi^{k+1} &= \inf_{\chi}L_{\rho}(\chi, \psi^k, y^k) \\
    \psi^{k+1} &= \inf_{\psi}L_{\rho}(\chi^{k+1}, \psi, y^k) \\
    y^{k+1} &= y^{k} + \rho(A\chi^{k+1} + B\psi^{k+1} - d)
  \end{align*}
  It is remarkable that, although ADMM can be considered as approximate dual ascent, it has superior convergence guarantees \citep{Eckstein1992}.

  \subsection{Solving \ref{eqn:approximation_problem_symm_reduced} with ADMM} \label{subsec:algorithm}
  Problem \ref{eqn:approximation_problem_symm_reduced} is a quadratic program (QP) with strictly convex objective function. Solving QPs with ADMM has been widely studied in the literature \citep{Stellato2017}, \citep{Garstka2019}, \citep{ODonoghue2016}. We will follow the approach of \citep{Stellato2017} which can solve \ref{eqn:approximation_problem_symm_reduced} by applying ADMM to the following splitting
  \begin{equation}\label{eqn:splitting_osqp}
    \begin{array}{ll}
      \mbox{minimize} & f(\tilde x, \tilde z) + g(x, z) \\
      \mbox{subject to} & (\tilde x, \tilde z) = (x, z)
    \end{array}
  \end{equation}
  where $f$ and $g$, are defined as
  \begin{align*}
  f(\tilde x, \tilde z) &= \frac{1}{2}x^T P x - Pc_u + \mathcal{I}_{A\tilde x = \tilde z}(\tilde x, \tilde z) \\
  g(x, z) &= \mathcal{I}_{x \geq 0}(x) + \mathcal{I}_{z = \mathbf{1}_{2n}}(z)
  \end{align*}
  and $P \eqdef \diag(p \odot p)$, $\mathcal{I}_{A\tilde x = \tilde z}$, $\bar n_{\textrm{nz}}$ are the number of nonzeros in the upper triangular of $C$ and $\mathcal{I}_{x \geq 0}$, $\mathcal{I}_{z = \mathbf{1}_{2n}}$ denote the indicator functions of the sets $\set{(x, z) \in \mathbb{R}^{\bar n_{\textrm{nz}}} \times \mathbb{R}^{2n}}{Ax = z}$, $\set{x \in \mathbb{R}^{\bar n_{\textrm{nz}}}}{x \geq 0}$ and $\set{x \in \mathbb{R}^{2n}}{x = \mathbf{1}_{2n}}$ respectively.

  Applying ADMM for the problem \eqref{eqn:splitting_osqp} results\footnote{Algorithm \ref{alg:admm} includes two extensions over the simple ADMM algorithm discussed in \S \ref{subsec:admm}: it uses \emph{over-relaxation}, a commonly used variation that can increase the speed of convergence \citep[Figure 2]{Eckstein1992}, and two different step sizes $\rho, \sigma$ for the update of Lagrange multipliers $w$ and $y$ respectively.} in Algorithm \ref{alg:admm}, where $\Pi_{+}$ denotes the projection of a vector to the nonnegative orthant and $\Pi_1(x) \eqdef \mathbf{1}$ for any vector $x$. Refer to \citep[\S 3]{Stellato2017} for details.
  \begin{algorithm}
    \textbf{given} initial values $x^0, z^0, y^0$ and parameters $\rho > 0$,  $\sigma > 0$, and $\alpha \in (0, 2)$\;
    \Repeat{termination condition is satisfied}{
    $(\tilde x^{k+1}, \tilde z^{k+1}) \leftarrow$  \text{solution of the linear system}
    $
      \begin{bmatrix}
        (P + \sigma I_{\bar n_{\textrm{nz}}})& \rho A^T \\
        \rho A & -\rho I_{2n}
      \end{bmatrix}
      \begin{bmatrix}
        \tilde x^{k+1}\\
        \tilde z^{k+1}
      \end{bmatrix}
      =
      \begin{bmatrix}
        \sigma x^{k} - w^k + A^T (\rho z^k - y^k) + P c_u\\
        0
      \end{bmatrix}
    $\;
    $x^{k+1} \leftarrow \Pi_{+}(\alpha \tilde x^{k+1} + (1 - \alpha) x^k$)\;
    $z^{k+1} \leftarrow \Pi_{\mathbf{1}}(\alpha \tilde z^{k+1} + (1 - \alpha)z^k + \rho^{-1}y^k)$\;
    $w^{k+1} \leftarrow w^k + \sigma(\alpha \tilde x^{k+1} + (1 - \alpha)x^k - x^{k+1})$\;
    $y^{k+1} \leftarrow y^k + \rho(\alpha \tilde z^{k+1} + (1 - \alpha)z^k - z^{k+1})$\;
    }
    \caption{Solving \ref{eqn:approximation_problem_symm_reduced} with ADMM}
    \label{alg:admm}
  \end{algorithm}

The most computationally intensive operation of Algorithm \ref{alg:admm} is the solution of the $(\bar n_{\textrm{nz}} + n)\times (\bar n_{\textrm{nz}} + n)$ linear system in line 3. We will show that its solution can be obtained by solving instead a reduced $n\times n$ linear system.
\begin{fact} \label{fact:equality_qp}
  Consider the following linear system
  \begin{equation} \label{eqn:linear_system}
    \begin{bmatrix}
      P + \sigma I_{\bar n_{\textrm{nz}}} & \rho A^T \\
      \rho A & -\rho I_{2n}
    \end{bmatrix}
    \begin{bmatrix}
      x\\
      z
    \end{bmatrix}
    =
    \begin{bmatrix}
      r \\
      0
    \end{bmatrix}
  \end{equation}
  where $x, r \in \mathbb{R}^{\bar n_{\textrm{nz}}}, z \in \mathbb{R}^{\bar n_{\textrm{nz}}}$ and $A \in \mathbb{R}^{2n \times \bar n_{\textrm{nz}}}$. Its solution can be obtained as follows
  \begin{enumerate}
    \item Obtain $z$ by solving the following $n \times n$ positive definite linear system
    \begin{equation} \label{eqn:reduced_linear_system}
      (\rho A(P + \sigma I_{\bar n_{\textrm{nz}}})^{-1}A^T + I_{n}) z = A(P+\sigma I_{\bar n_{\textrm{nz}}})^{-1}r
    \end{equation}
    \item Obtain $x$ as $(P + \sigma I_{\bar n_{\textrm{nz}}})^{-1}(r - \rho A^Tz)$.
  \end{enumerate}
\end{fact}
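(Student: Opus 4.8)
The plan is to treat \eqref{eqn:linear_system} as a saddle-point (KKT) system and eliminate the block $x$ by forming the Schur complement with respect to the $(1,1)$ block $P + \sigma I_{\bar n_{\textrm{nz}}}$. This elimination is legitimate precisely because $P = \diag(p \odot p) \succeq 0$ and $\sigma > 0$, so $P + \sigma I_{\bar n_{\textrm{nz}}}$ is diagonal and positive definite, hence invertible with an inverse that is trivial to apply.

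First I would write the two block-row equations explicitly: the top row reads $(P + \sigma I_{\bar n_{\textrm{nz}}}) x + \rho A^T z = r$, and the bottom row reads $\rho A x - \rho z = 0$, i.e.\ $A x = z$ after dividing by $\rho$. Solving the top row for $x$ immediately gives step~2 of the claim, $x = (P + \sigma I_{\bar n_{\textrm{nz}}})^{-1}(r - \rho A^T z)$. Substituting this into $A x = z$ yields $A (P + \sigma I_{\bar n_{\textrm{nz}}})^{-1} r - \rho A (P + \sigma I_{\bar n_{\textrm{nz}}})^{-1} A^T z = z$, and collecting the $z$-terms on the left produces exactly the reduced system \eqref{eqn:reduced_linear_system}. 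Conversely, if $z$ solves \eqref{eqn:reduced_linear_system} and $x$ is defined by step~2, then the top block row of \eqref{eqn:linear_system} holds by construction, and reversing the algebra shows $A x = z$, so the bottom row holds too; hence the stated two-step procedure recovers the unique solution of \eqref{eqn:linear_system}.

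Finally I would justify the positive-definiteness claim for the reduced coefficient matrix: since $(P + \sigma I_{\bar n_{\textrm{nz}}})^{-1} \succ 0$, the matrix $A (P + \sigma I_{\bar n_{\textrm{nz}}})^{-1} A^T$ is symmetric positive semidefinite, so $\rho A (P + \sigma I_{\bar n_{\textrm{nz}}})^{-1} A^T + I$ is symmetric positive definite for every $\rho > 0$; in particular \eqref{eqn:reduced_linear_system} is uniquely solvable, which also re-confirms that \eqref{eqn:linear_system} is nonsingular.

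I do not anticipate a genuine obstacle here: the statement is a routine block-elimination / Schur-complement identity. The only two points that need a word of care are (i) that invertibility of the $(1,1)$ block rests on $\sigma > 0$ and not on any rank property of $A$ or $P$, and (ii) that the elimination is reversible, so that the two steps deliver the actual solution rather than merely a necessary condition on it.
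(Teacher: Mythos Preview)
Your proposal is correct and follows exactly the same approach as the paper: solve the top block row for $x$ using the invertibility of $P+\sigma I$, then substitute into the bottom row $\rho A x = \rho z$ to obtain the reduced system in $z$. Your write-up is in fact more complete than the paper's, which omits the explicit justification of positive definiteness of the reduced matrix and of the reversibility of the elimination.
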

\begin{proof}
  The first block row gives $(P + \sigma I_{\bar n_{\textrm{nz}}}) x + \rho A^Tz = r \Leftrightarrow x = (P + \sigma I_{\bar n_{\textrm{nz}}})^{-1}(r - \rho A^T z)$. Reducing the variable $x$ from \eqref{eqn:linear_system} results in \eqref{eqn:reduced_linear_system}. \qed
\end{proof}
Thus solving \eqref{eqn:linear_system} can be reduced to solving a linear system with left hand side
\begin{equation} \label{eqn:reduced_matrix}
  (\rho A(P + \sigma I_{\bar n_{\textrm{nz}}})^{-1}A^T + I_{n}).
\end{equation}
Fortunately, the reduced matrix \eqref{eqn:reduced_matrix}, which is equal to
$A\diag (H_u\vc(U))A^T$
with
\begin{equation*}
U \eqdef
\begin{bmatrix}
  (4 + \sigma)^{-1} & (2 + \sigma)^{-1} & \cdots & (2 + \sigma)^{-1} \\
  & \ddots  & & \vdots \\
   & & (4 + \sigma)^{-1} & (2 + \sigma)^{-1} \\
  0 & & & (4 + \sigma)^{-1}
\end{bmatrix}, \\
\end{equation*}
turns out to be positive definite with a sparsity pattern matching that of $C+I$:
\begin{theorem} \label{thm:AA^T}
  The following relation holds
  \begin{align*} \label{eqn:AA^T}
    &A\diag(H_u\vc(D))A^T \\
    = &S \odot (D + D^T) + \diag(S \odot (D + D^T) \mathbf{1})
  \end{align*}
  for any upper triangular $n\times n$ matrix $D$.
\end{theorem}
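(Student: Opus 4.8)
The plan is to absorb the selection matrix $H_u$ first, and then reduce the claim to a short computation involving only the two ``aggregation'' operators $A_1 = \mathbf{1}_n^T \otimes I_n$ and $A_2 = I_n \otimes \mathbf{1}_n^T$. Since $A = (A_1+A_2)H_u^T$, the left-hand side equals $(A_1+A_2)\,H_u^T \diag(H_u\vc(D))\,H_u\,(A_1+A_2)^T$. Because every row of $H_u$ is a distinct standard basis row vector, $H_u^T \diag(H_u\vc(D))\,H_u = \diag\!\big(H_u^T H_u \vc(D)\big)$, and $H_u^T H_u$ is the diagonal $0/1$ matrix masking the support of the upper-triangular matrix $C_u$. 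As $D$ is upper triangular, masking $D$ with this support coincides with masking it with $S$; writing $E \eqdef S\odot D$, the left-hand side is $(A_1+A_2)\diag(\vc(E))(A_1+A_2)^T$. (Incidentally, this already shows both sides depend on $D$ only through $S\odot D$, so ``any upper triangular $D$'' carries no real extra generality.) Since $E + E^T = S\odot(D+D^T)$ --- immediate entrywise using $S_{ij}=S_{ji}$ --- it then suffices to prove
\[
  (A_1+A_2)\diag(\vc(E))(A_1+A_2)^T = (E+E^T) + \diag\!\big((E+E^T)\mathbf{1}\big).
\]

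Next I would expand this product into its four cross terms and evaluate each, writing $\diag(\vc(E))$ as the block-diagonal matrix whose $j$-th block is $\diag(E_{\cdot,j})$ and using the Kronecker identity \eqref{eqn:kronecker_identity} where convenient. Since $A_1 = [\,I_n \mid \cdots \mid I_n\,]$, one gets $A_1\diag(\vc(E))A_1^T = \sum_j \diag(E_{\cdot,j}) = \diag(E\mathbf{1})$; since $A_2$ is block-diagonal with blocks $\mathbf{1}_n^T$, symmetrically $A_2\diag(\vc(E))A_2^T = \diag(E^T\mathbf{1})$. For the mixed term, block $i$ of $\diag(\vc(E))A_2^T$ is $\diag(E_{\cdot,i})\mathbf{1}_n e_i^T = E_{\cdot,i}e_i^T$, so $A_1\diag(\vc(E))A_2^T = \sum_i E_{\cdot,i}e_i^T = E$, and hence $A_2\diag(\vc(E))A_1^T = E^T$. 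Summing the four contributions yields $\diag(E\mathbf{1}) + E + E^T + \diag(E^T\mathbf{1})$, which is exactly the right-hand side above.

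The one place that needs care is the mixed term $A_1\diag(\vc(E))A_2^T = E$, where the column-wise ordering of $\vc(\cdot)$ interacts with the fact that $A_1$ aggregates over columns of $E$ while $A_2$ aggregates over rows; it is worth checking this once on a small case (say $n=2$) to pin down the indexing before writing it up. A fully self-contained alternative, avoiding the block bookkeeping, is to compute the $(i,j)$ entry of $A\diag(H_u\vc(D))A^T$ directly from the structure of $A$: row $i$ of $A$ carries a $2$ in the coordinate of $x_u$ indexed by $(i,i)$ when $C_{ii}\neq 0$, and a $1$ in each coordinate indexed by a pair $\{i,k\}$ with $k\neq i$ and $C_{ik}\neq 0$; multiplying out, splitting into the cases $i=j$ and $i\neq j$, and reindexing via $S_{ik}=S_{ki}$ reproduces the diagonal and off-diagonal entries of the claimed right-hand side.
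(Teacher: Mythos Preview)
Your proof is correct and follows essentially the same route as the paper's: both absorb $H_u$ to reduce the left-hand side to $(A_1+A_2)\diag(\vc(E))(A_1+A_2)^T$ with $E=S_u\odot D=S\odot D$, and then evaluate the four cross terms $A_i\diag(\vc(E))A_j^T$ from the block structure of $A_1$ and $A_2$. The only difference is cosmetic: the paper inserts a square-root factorization $\diag(d)=\diag(\sqrt d)\,H_uH_u^T\,\diag(\sqrt d)$ so as to package the four terms as the blocks of a symmetric product $[B_1;B_2][B_1;B_2]^T$ before summing, whereas you expand the quadratic form directly---a slightly cleaner execution that also sidesteps the (harmless, by linearity) issue of $\sqrt{D}$ for entries of $D$ that are not nonnegative.
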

\begin{proof}
  The proof is in the Appendix.\qed
\end{proof}

The solution of the reduced linear system \eqref{eqn:reduced_linear_system} can be obtained given an initial Cholesky factorization of $\rho A(P + \sigma I_{\bar n_{\textrm{nz}}})^{-1}A^T + I_n$, or even with a factorization free algorithm e.g.,~Conjugate Gradients which primarily consists of repeated matrix-vector multiplications with $\rho A(P + \sigma I_{\bar n_{\textrm{nz}}})^{-1}A^T + I_n$. The fact that the linear system to be solved has the same sparsity pattern of $S + I$ can be particularly beneficial in cases where a fill-in reducing permutation is already known for the matrix under approximation $C$ (and thus for $S$), since the same permutation could be used before the factorization of $\rho A(P + \sigma I_{\bar n_{\textrm{nz}}})^{-1}A^T + I_n$ resulting in reduced fill-in and thus significant speedup.

\subsubsection{Convergence and Feasibility}
We terminate Algorithm \ref{alg:admm} when the primal and dual residuals of \ref{eqn:approximation_problem_symm_reduced}
\begin{align*}
  r_{\text{prim}} &\eqdef \norm{Au - \mathbf{1}}_\infty, \\
  r_{\text{dual}} &\eqdef \norm{P x - c_u + A^T y + w}_\infty
\end{align*}
become smaller than some acceptable tolerance. Algorithm \ref{alg:admm} is guaranteed to converge to the solution of \ref{eqn:approximation_problem_symm_reduced} whenever \ref{eqn:approximation_problem_symm_reduced}, or equivalently \ref{eqn:approximation_problem}, is feasible.

We next establish conditions that characterize the feasibility of \ref{eqn:approximation_problem}:
\begin{lemma} \label{lem:feasibility}
  Problem \ref{eqn:approximation_problem} is feasible if and only if there exists a set of indices $\mathcal{I} = \{(i_1, j_1) \dots, (i_n, i_n)\}$ corresponding to exactly one nonzero element from each row and column of $C$.
\end{lemma}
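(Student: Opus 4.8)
The plan is to prove the two directions separately, reducing feasibility of \ref{eqn:approximation_problem} to the existence of a perfect matching in the bipartite graph of nonzeros of $C$. For the ``if'' direction, suppose an index set $\mathcal{I} = \{(i_1,j_1),\dots,(i_n,j_n)\}$ as in the statement exists. Since $\mathcal{I}$ picks exactly one entry from each row and each column, the assignment $i_k \mapsto j_k$ defines a permutation $\pi$ of $\{1,\dots,n\}$ with $C_{i,\pi(i)} \neq 0$ for all $i$. The corresponding permutation matrix $X$, with $X_{i,\pi(i)} = 1$ and all other entries zero, is nonnegative, satisfies $X\mathbf{1} = \mathbf{1}$ and $X^T\mathbf{1} = \mathbf{1}$, and has nonzeros only at positions where $C$ is nonzero; hence $X$ is feasible for \ref{eqn:approximation_problem}.

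For the ``only if'' direction, let $X$ be any feasible point of \ref{eqn:approximation_problem}. I would form the bipartite graph $G$ on row-vertices $\{1,\dots,n\}$ and column-vertices $\{1,\dots,n\}$ with an edge $(i,j)$ whenever $X_{i,j} > 0$; by feasibility every such edge has $C_{i,j} \neq 0$, so a perfect matching of $G$ is precisely an index set $\mathcal{I}$ of the required form. To produce one, I would invoke Hall's marriage theorem. Let $T$ be any set of rows and $N(T)$ its neighborhood among the columns. On the one hand, since each row of $X$ sums to one, $\sum_{i \in T}\sum_{j=1}^n X_{i,j} = |T|$. On the other hand, for $i \in T$ we have $X_{i,j} = 0$ unless $j \in N(T)$, so $\sum_{i \in T}\sum_{j=1}^n X_{i,j} = \sum_{i \in T}\sum_{j \in N(T)} X_{i,j} \le \sum_{j \in N(T)}\sum_{i=1}^n X_{i,j} = |N(T)|$, using that each column of $X$ sums to one. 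Combining, $|N(T)| \ge |T|$, so Hall's condition holds, $G$ has a perfect matching, and the matched positions give the desired $\mathcal{I}$.

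The argument is elementary and I do not anticipate a genuine obstacle; the only point needing care is the chain of (in)equalities above, where the restriction of the row sums over $T$ must first be rewritten as a sum over columns in $N(T)$ before being bounded by the full column sums. An alternative route for the ``only if'' direction is to appeal to the Birkhoff--von Neumann theorem: a feasible $X$ is a convex combination of permutation matrices, each supported within the support of $X$ and hence of $C$, and any one of them yields $\mathcal{I}$; I would prefer the self-contained Hall argument but could mention this as a remark. Finally, note that neither direction uses symmetry of $C$, so the same characterization applies to \ref{eqn:approximation_problem_symm_reduced} through Theorem \ref{thm:final_problem}.
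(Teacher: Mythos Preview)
Your proof is correct. The ``if'' direction coincides with the paper's: both exhibit the permutation matrix supported on $\mathcal{I}$ as a feasible point. For the ``only if'' direction you take a genuinely different route. The paper simply invokes \citep[Theorem~1]{Perfect1965}, which asserts that any doubly stochastic matrix possesses a positive diagonal (i.e.,\ $n$ positive entries meeting each row and column exactly once), and then transfers this to $C$ via the sparsity constraint. You instead prove that statement from scratch: you build the bipartite graph on the support of a feasible $X$, verify Hall's condition using the row- and column-sum constraints together with nonnegativity, and extract a perfect matching. Your argument is self-contained and elementary, at the cost of a few more lines; the paper's is terser but relies on an external reference that is itself typically proved by exactly your Hall argument or by the Birkhoff--von~Neumann decomposition you mention as an alternative. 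Either route is perfectly adequate here.
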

\begin{proof}
  Regarding the ``if'' part, the $n \times n$ matrix
  \begin{equation}
    X_{i, j} = \begin{cases}
      1 \quad \text{if} \quad i, j \in I \\
      0 \quad \text{otherwise}
    \end{cases}
  \end{equation}
  is a feasible point for \ref{eqn:approximation_problem}.
  Regarding the ``only if'' part, since \ref{eqn:approximation_problem} has a feasible point $X$, then according to \citep[Theorem 1]{Perfect1965} there exists a set of indices $\mathcal{I} = \{(i_1, j_1) \dots, (i_n, i_n)\}$ corresponding to exactly one nonzero element from each row and column of $X$. Due to the second constraint of \ref{eqn:approximation_problem}, the same argument also holds for $C$.\qed
\end{proof}
Note that Lemma \ref{lem:feasibility} and \citep[Theorem 1]{Perfect1965} imply that \ref{eqn:approximation_problem} is feasible whenever the matrix balancing problem is feasible, i.e. when the matrix $C$ has total support \citep{Knight2013}.

\subsubsection{Special cases and generalizations} \label{subsubsec:generalizations}
\paragraph{The case where $C$ is almost or fully dense:}
In the special case where $C$ is fully dense we have $S = \mathbf{1}_n\mathbf{1}_n^T$, and Theorem \ref{thm:AA^T} gives
\begin{equation*}
  \rho A(P + \sigma I_{\bar n_{\textrm{nz}}})^{-1}A^T + I_n = \alpha I +  \beta \mathbf{1}_{n \times n}
\end{equation*}
where
$$
  \alpha \eqdef \frac{\sigma \rho}{(2 + \sigma/2)(2 + \sigma)} + \frac{n \rho}{2 + \sigma} + 1
  \quad \text{ and } \quad
  \beta \eqdef \frac{\rho}{2 + \sigma}.
$$
Using the Sherman-Morrison formula, we can explicitly calculate the inverse of \eqref{eqn:reduced_linear_system} as
\begin{equation}
  \label{eqn:explicit_inverse}
  (\rho A(P + \sigma I_{\bar n_{\textrm{nz}}})^{-1}A^T + I_n)^{-1} = \frac{1}{\alpha}\left(I - \frac{\beta  \mathbf{1}_n \mathbf{1}_n ^T}{\alpha + \beta n}\right).
\end{equation}
We can then solve \eqref{eqn:linear_system} and perform ADMM on \ref{eqn:approximation_problem_symm_reduced} without the need to perform an initial matrix factorization.

This approach can also be extended to cases where $C$ has a relatively small number of zero elements. Indeed, by avoiding the elimination of the zero variables of \ref{eqn:approximation_problem} we get the following variant of \ref{eqn:approximation_problem_symm_reduced}:
  \begin{equation} \label{eqn:approximation_problem_dense} \tag{$\mathcal{P}_3$}
    \begin{array}{ll}
      \mbox{minimize}   & \frac{1}{2}\norm{p \odot x_u - c_u}^2 \\
      \mbox{subject to} & x_u \geq 0 \\
      & x_{u_i} = 0 \; \forall \; i \; \mbox{with} \; c_{u_i} = 0\\
      & Ax_u = \mathbf{1}_{2n}
   \end{array}
  \end{equation}
  where $p, A, x_u$, and $c_u$ are defined according to Section \ref{sec:modeling}, but with $H_u$ an $\frac{(n + 1)n}{2} \times n^2$ linear map that extracts \emph{all} the upper triangular elements of a vectorized $n \times n$ matrix. \ref{eqn:approximation_problem_dense} can then be solved with Algorithm \ref{alg:admm}, with the following two changes.
  First, we replace $\Pi_{+}$ (line 2, Algorithm \ref{alg:admm}) with $\Pi_{\mathcal{S}}$, where $\mathcal{S} \eqdef \{x \in \mathbb{R}^{(n^2 +n)/2} \mid x \geq 0 \text{ and } x_i = 0 \text{ for all } i \text{ such that } c_i = 0\}$. Secondly, the solution of the linear system of line 3 of Algorithm \ref{alg:admm} is trivially solved using Fact \ref{fact:equality_qp} and \eqref{eqn:explicit_inverse}.

  \paragraph{Solving variants of \ref{eqn:approximation_problem} with Algorithm \ref{alg:admm}:}
  Algorithm \ref{alg:admm} can be easily adjusted to the case where $\norm{\cdot}$ in the objective of \ref{eqn:approximation_problem} is a weighted Frobenius norm, i.e. $\norm{X} = \norm{W \odot X}_F$ where $W$ is a given symmetric matrix.
  The only thing that has to change is the definition of $p$, and thus of $P \eqdef \diag(p \odot p)$, to:
  \begin{equation*}
    p \eqdef H_u\vc\left(\begin{bmatrix}
      2 & \sqrt{2} & \cdots & \sqrt{2} \\
      & \ddots  & & \vdots \\
        & & 2 & \sqrt{2} \\
      0 & & & 2
    \end{bmatrix} \odot W \right).
  \end{equation*}
  Theorem 3.2 could then be used to solve the linear system of Algorithm \ref{alg:admm} (line 3) efficiently.
  Similarly, we can allow for general constraints $X \mathbf{1} = r$ and $X^T \mathbf{1} = r$ in \ref{eqn:approximation_problem}, where $r$ is a given nonnegative vector, by simply changing $\Pi_\mathbf{1}$ in line 5 of Algorithm \ref{alg:admm} to $\Pi_r(x) \eqdef r$. Finally, non-square or non-symmetric matrices $C$ can be solved via use of \ref{eqn:approximation_problem} for the symmetric matrix $\begin{bmatrix} 0 & C \\ C^T & 0 \end{bmatrix}$.

\section{NUMERICAL EXPERIMENTS} \label{sec:results}
In this section we present numerical results of Algorithm \ref{alg:admm} on a range of matrix normalization problems. We provide a \texttt{Julia} implementation of the Algorithm, along with code that generates all the results of this section at:
\centerline{\texttt{github.com/oxfordcontrol/DoublyStochastic.jl}}\\

\subsection{Normalizing Hi-C Contact Matrices of the Human Genome in the 3D Space}
We first present results on the application of our method to real-world contact matrices describing the 3D structure of the human genome, starting with a description of the nature of these matrices. The reader can also find an excellent, visual introduction of the problem in \texttt{youtu.be/dES-ozV65u4}. The human genome has an end-to-end length on the order of meters when unfolded, but fits inside the cell nucleus with dimensions on the order of micrometers, implying that the genome is heavily folded in the 3D space. The 3D structure of the genome can be examined by breaking the genome into a number of pieces and measuring how many contacts exist between each piece in the 3D space \citep{Rao2014}.
This produces \emph{Hi-C contact matrices}, where the term \emph{Hi-C} describes the particular experimental procedure used.

The process is, however, subject to errors and experimental constraints. To alleviate these issues, the contact matrix is normalized so that all its rows and columns sum to the same value. The matrix balancing approach is often the method of choice for this task \citep[Supplemental Material II.b]{Rao2014}, but other methods have also been suggested in the literature \citep{Yaffe2011}.

We will show that our approach can also be used for this task even for contact matrices containing hundreds of millions of nonzero entries as in \cite{Rao2014}. In particular, we consider normalization of the contact matrix of the $7^{\textrm{th}}$ chromosome of the GM12878 cell\footnote{The data corresponding to the thresholding criterion $\text{MAPQ} \geq 30$ were used \citep[Supplemental Material IIa.4]{Rao2014}. Obtained from the GM12878 ``combined'' intrachromosomal tarball at \footnotesize{\texttt{ncbi.nlm.nih.gov/geo/query/acc.cgi?acc=GSE63525}}}, thus replicating \citep[Figure 1.C]{Rao2014}.
Since the genome consists of sequences of the bases \emph{adenine, guanine, cytosine} and \emph{thymine}, it is typical to measure the length of each genome piece by the average number of bases it contains. The total range of the contact matrices is $0$ to $160$ mega-bases (Mb). We consider two discretization lengths, $1$ kilo-base (Kb), and $5$Kb, which result in contact matrices of $151$ and $82$ million nonzeros respectively. Figure \ref{fig:hi-c} provides detailed views of the contact matrices, spanning the range $[137.2$--$137.8\text{Mb}]$ for the contact map at $5$Kb resolution and the $[137.55$--$137.75\text{Mb}]$ for the one at $1$Kb resolution. These regions were chosen to highlight interesting regions of the contact matrix as they appear in \citep[Figure 1.C, rightmost column, two bottom subfigures]{Rao2014}.

Our approach produces considerably different normalized contact matrices than the matrix balancing approach. In particular, our approach results in contact matrices that have increased sparsity and higher contrast. This is unlike the matrix balancing approach which results in a normalized matrix that has exactly the same nonzeros as the original matrix. Although further investigation is necessary for the evaluation of the suitability of the approach in Hi-C data, the results indicate that our method can be used to normalize very large Hi-C datasets leading to promising visual results.
\begin{figure}[b!]
  \fbox{\includegraphics[width=1.0\linewidth]{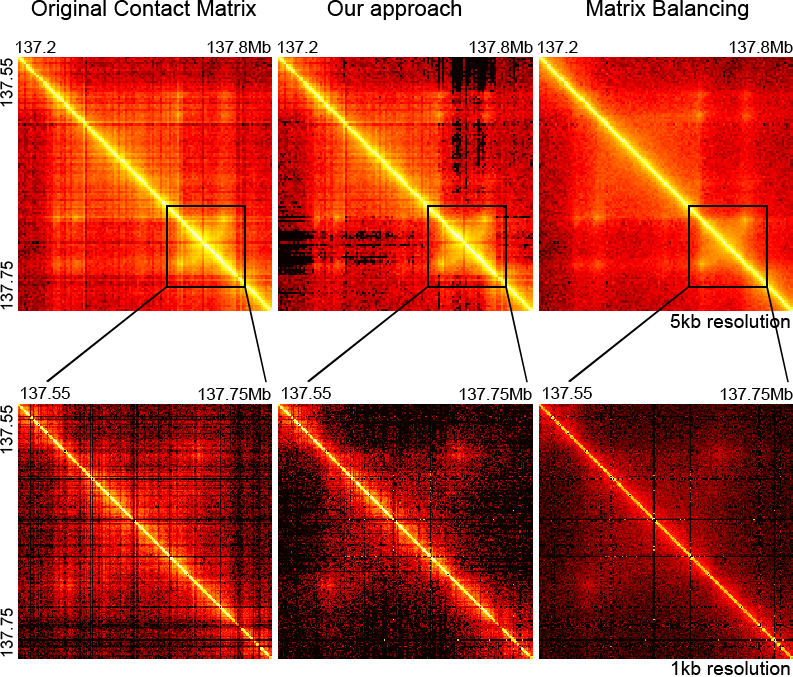}}
  \caption{Details of Hi-C Contact Matrices for the $7^{\textrm{th}}$ chromosome of the GM12878 cell, corresponding to \citep[Figure 1.C, rightmost column, two bottom subfigures]{Rao2014}. Top row shows the area $[137.2 -137.8 \text{Mb}]^2$ for the contact matrix of $5$Kb resolution. Bottom row shows the area $[137.55-137.75 \text{Mb}]^2$ for the contact matrix of $1$Kb resolution. Areas representing zero contacts are depicted in black, while areas with a high number of contacts are shown in yellow. The total area of the contact matrices is $[0-160\text{Mb}]^2$, thus the areas depicted  are zoomed $71$ and $640$ thousand times in the top and bottom figures respectively. The leftmost column shows the original contact matrices.
  \cite{Rao2014} normalize the $n \times n$ contact matrix $C$ via the matrix balancing method of \citep{Knight2013} so that its columns and rows sum to $\sum_{i, j}{C_{i, j}}/n$. The resulting normalized matrices are depicted in the rightmost column.
  The middle column depicts the results of Algorithm \ref{alg:admm} for normalizing the matrices so that its columns and rows sum to $\sum_{i, j}{C_{i, j}}/n$. The Conjugate Gradient method is used to solve the linear system \eqref{eqn:reduced_linear_system} since using a Cholesky factorization resulted in memory issues. A tolerance of $10^{-3}$ is used for the termination of our Algorithm. The normalization of the $5$Kb and $1$Kb contact matrix take $701$ and $3136$ seconds respectively on a single-threaded implementation on Intel Gold 5120, 192GB memory.
  }
  \label{fig:hi-c}
\end{figure}

\subsection{Spectral clustering problems}
Next, we present results of running our Algorithm on correlation matrices arising from spectral clustering \citep{Zass2007}. In spectral clustering one is given a set of points $\{x_i \in \mathbb{R}^d\}$ to be arranged into $l$ clusters. To this end, an \emph{affinity matrix} $C$ is generated with each entry $C_{i, j}$ representing a measure of the pairwise similarity between points $i$ and $j$. This matrix is then normalized and used by later stages of the clustering procedure. \cite{Zass2007} suggested that the normalization
\begin{equation}\label{eqn:approximation_problem_zass}
  \begin{array}{ll}
    \mbox{minimize}   & \frac{1}{2}\norm{X - C}^2 \\
    \mbox{subject to} & X \; \mbox{nonnegative} \\
                      & X \mathbf{1}_n = \mathbf{1}_n,\,\,
                      X^T \mathbf{1}_n = \mathbf{1}_n,
 \end{array}
\end{equation}
 leads to superior clustering performance in various different test cases. Note that solving \eqref{eqn:approximation_problem_zass} is equivalent to solving \ref{eqn:approximation_problem} for $C + \epsilon_{\text{p}} \mathbf{1}_{n \times n}$. Note that the formulation \eqref{eqn:approximation_problem_zass} of \cite{Zass2007} does not exploit sparsity in $C$. Nevertheless, \cite{Zass2007} suggested that the following iterative scheme can be used to solve \ref{eqn:approximation_problem_zass}
\begin{subequations}\label{eqn:zaas}
\begin{align}
  \nonumber
  \tilde X^{k} &= X^k + n^{-2}(\mathbf{1}_{n}^T X^k\mathbf{1}_{n} + n)\mathbf{1}_{n \times n}\\
  &\quad\quad\;\;\;-n^{-1}(X^k \mathbf{1}_{n \times n} + \mathbf{1}_{n \times n}X^k)\\
  X^{k + 1} &= \Pi_+\left(\tilde X^{k}\right).
\end{align}
\end{subequations}
The first step in \eqref{eqn:zaas} minimizes the objective of \ref{eqn:approximation_problem} subject to the equality constraints, while the second projects the iterate to the nonnegative orthant. However, this approach is not guaranteed to solve \ref{eqn:approximation_problem} to optimality. For example, in the simple case of $C = \frac{1}{10}\tiny\begin{bmatrix} 1 & 9 & 9 \\ 9 & 1 & 0 \\ 9 & 0 & 9 \end{bmatrix},$ \eqref{eqn:zaas} converges to a suboptimal point $\bar X$ with $\norm{\bar X - X^*}_{\infty} = 0.0\overline{7}$ where $X^*$ is the optimizer. Nevertheless, it appears that, in general, \eqref{eqn:zaas} converges to a feasible point. However, even convergence to a feasible point can be much slower than our approach, as demonstrated in Figure \ref{fig:alternating-projections}.
  \begin{figure}
    \centering
    {\includegraphics[width=.40\textwidth]{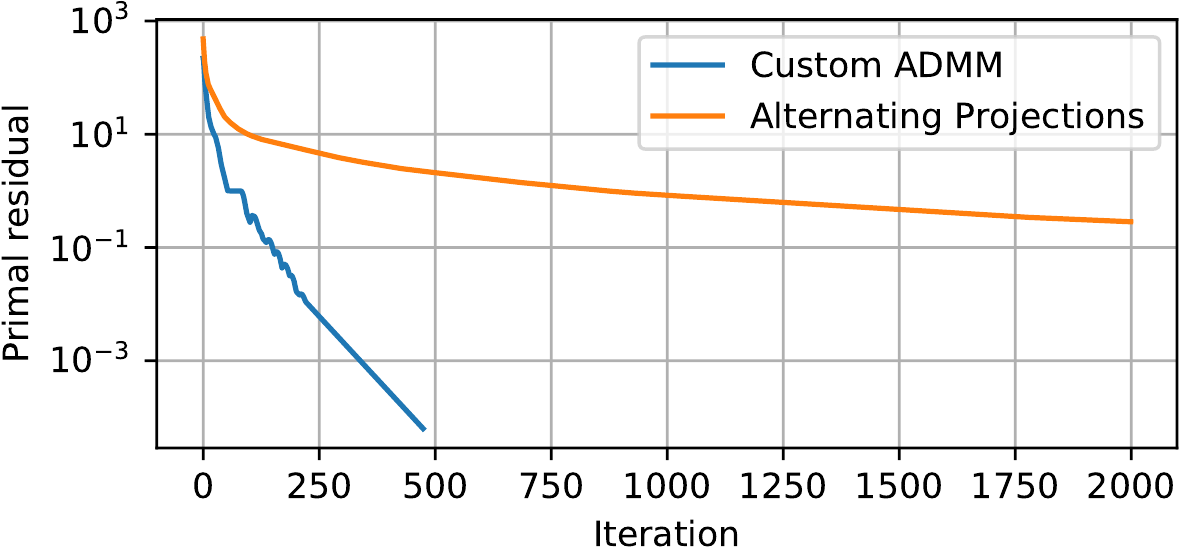}}
    \caption{Comparison of the primal convergence (i.e.\ feasibility) of Algorithm \ref{alg:admm} vs.\ the  approach of \citep{Zass2007} \eqref{eqn:zaas} on the Spambase dataset with \eqref{eqn:kernel} as affinity criterion with $\sigma = 100$.}
    \label{fig:alternating-projections}
    \vspace{-2ex}
  \end{figure}

  Besides guaranteed convergence to the optimizer of \ref{eqn:approximation_problem}, our approach can also handle sparsity in the affinity matrices. To demonstrate how exploiting sparsity can lead to significant speedups we consider the Spambase dataset\footnote{\texttt{archive.ics.uci.edu/ml/datasets/spambase}} (considered in \cite{Zass2007}) with an RBF kernel as an affinity criterion
  \begin{equation} \label{eqn:kernel}
    C_{i, j}(\sigma) = e^{-\norm{x_i - x_j}^2/\sigma^2},
  \end{equation}
  where $\sigma$ is a parameter that is typically tuned to achieve the best clustering performance. Note that due to the exponential form of $C(\sigma)$, some values will be very small. Therefore, we truncate to zero all entries with value less than $10^{-7}$. The runtimes of applying our Algorithm to this dataset are listed in Table \ref{tab:clustering} and compared to timings achieved with Gurobi (note that we use \ref{eqn:approximation_problem_reduced} for Gurobi as we consider \ref{eqn:approximation_problem_reduced} to be a ``standard'' reformulation of \ref{eqn:approximation_problem}). We observe that exploiting sparsity can lead to significant speedup as compared to treating the affinity matrix as fully dense, even if we follow the approach of \S\ref{subsubsec:generalizations}. At the same time, the optimizers of \ref{eqn:approximation_problem} for the affinity matrices $C(\sigma)$ considered in Table \ref{tab:clustering} appear to coincide with the ones for the fully dense $C(\sigma) + \epsilon_\text{p} \mathbf{1}_{n \times n}$.

\begin{table}
  \caption{Normalizing correlation matrices with Algorithm \ref{alg:admm} for spectral clustering on Spambase. A tolerance of $10^{-4}$ is used for the termination of our Algorithm. The Timings are expressed in seconds. and compared against Gurobi (on \ref{eqn:approximation_problem_reduced}) and against solving $C + \epsilon_{\text{p}} \mathbf{1}_{n \times n}$ with the approach of \S\ref{subsubsec:generalizations} where and is $C(\sigma)$ the original affinity matrix. Hardware used: Intel i7-5557U CPU @ 3.10GHz, 8GB Memory.}
  \label{tab:clustering}
  \sisetup{round-mode=places}
  \tabcolsep=0.06cm
  \begin{tabular}{
  l|
  *{5}{S[scientific-notation=true, round-precision=1, table-format=1.1e-2]}
  }
    \toprule
   $\sigma$ & 1.0 & {5.0} & {10.0} & {20.0} & \\
   $n_{\textrm{nz}}$ & 38822 & 1768350 & 4046782 & 7282984 \\
   \midrule
   $t_{\text{admm}}$ & 0.108655733 & 5.73347433 & 14.8491134 & 25.68104007 \\
   $t_{\text{gurobi}}$ & 0.393993741 & 46.33542767 & 109.3428531 & 195.512478
   \\
   $t_{\text{admm}}^{\text{dense}}$ &
   769.8787659 & 691.7524112 & 638.769045 & 718.8329326
  \end{tabular}
\end{table}

\subsection{Matrices from the SuiteSparse Collection}
Finally, we consider all Undirected Weighted Graph Matrices, with less than 50 million nonzeros, contained in the SuiteSparse collection\footnote{Available at \texttt{sparse.tamu.edu}}. $69$  matrices meet these criteria. We use Algorithm \ref{alg:admm} to normalize every matrix $C$ so that all of its columns and rows sum to $\max_{i, j}(C_{i, j})$. All of the problems, except two, have nonnegative entries. For these two exceptions, we change the negative entries to their absolute value, as we are unaware of practical cases where $C$ is expected to have negative entries.

Detailed comparison of our results with Gurobi presented in the Appendix. Figure \ref{fig:suitesparse} shows the timings achieved by our method, and the speedups relative to Gurobi (on \ref{eqn:approximation_problem_reduced}), as a function of each problem's nonzeros.
\begin{figure}[h!]
 	\centering
 	\includegraphics[width=.42\textwidth]{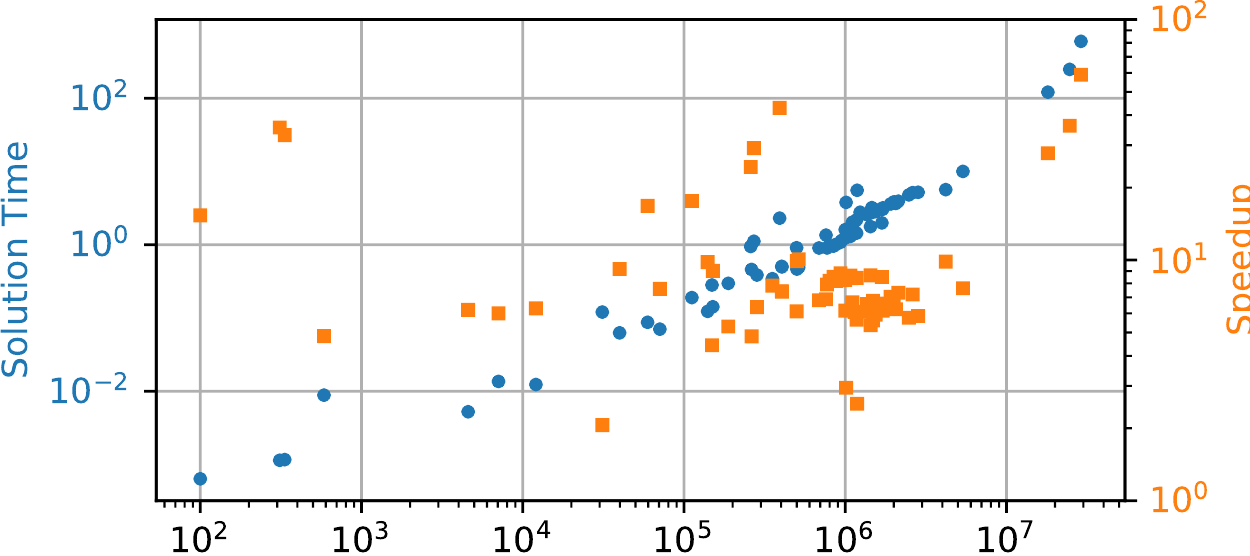}
 	\caption{Results of Algorithm \ref{alg:admm} for matrices from the SuiteSparse collection. Dots represent the timing of our Algorithm (with $10^{-4}$ tolerance), while squares represent the speedup achieved over Gurobi (on \ref{eqn:approximation_problem_reduced}). Hardware used: a single thread running on an Intel Gold 5120 with 192GB of memory.
  }
   \label{fig:suitesparse}
\end{figure}
\paragraph{Acknowledgements} This work was supported by the EPSRC AIMS CDT grant EP/L015987/1 and Schlumberger. Many thanks to Giovanni Fantuzzi for bringing this problem into our attention.

\bibliographystyle{plainnat}
\balance
\bibliography{../../Bibliography/bibliography}
\onecolumn
\appendix
\section{Proof of Theorem 3.2}\label{app:matrices}
\vspace{-2ex}
In this section we will prove Theorem 3.2 from the main paper, i.e.\ we will show that
\begin{equation*}
  A\diag(H_u \vc(D))A^T = S \odot (D + D^T) + \diag(S \odot (D + D^T) \mathbf{1})
\end{equation*}
where $S$ $(S_u)$ is an integer $(0-1)$ matrix representing the sparsity pattern of $C$ $(C_u)$ and $D$ is an upper triangular $n \times n$ matrix.

Using the fact that $H_u H_u^T = I$, the definition of $A$ and defining $d \eqdef H_u \vc D$ we get
\begin{align*}
  A \diag(d) A^T &= A \diag(\sqrt{d}) H_u H_u^T \diag(\sqrt{d}) A^T \\
  &= \left(A\diag(\sqrt{d})\diag(H_u)\right) \left(A\diag(\sqrt{d})\diag(H_u)\right)^T
\end{align*}
where $\sqrt{d}$ denotes the element-wise square root of $d$. Recalling that $A = A_1 + A_2$ we have
\begin{align*}
  A\diag(\sqrt{d})\diag(H_u) &= \underbrace{A_1\diag(\sqrt{d})\diag(H_u)}_{\eqdef B_1} + \underbrace{A_2\diag(\sqrt{d})\diag(H_u)}_{\eqdef B_2} \\
  \intertext{thus}
  A \diag(d) A^T
  &=
  \begin{bmatrix}
    I_n & I_n
  \end{bmatrix}
  \begin{bmatrix}
    B_1\\ B_2
  \end{bmatrix}
  \begin{bmatrix}
    B_1\\ B_2
  \end{bmatrix}^T
  \begin{bmatrix}
    I_n\\
    I_n
  \end{bmatrix}
\end{align*}
We now focus on the first part of the above symmetric product,
To this end, define $\sqrt{s}_i$ the $i^\textrm{th}$ column of $S_u \odot \sqrt D$ where $\sqrt D$ denotes the element-wise square root of $D$. Using the fact that $H_u^T\diag(x)H_u = \diag(\vc(S_u) \odot (H_u^T x))$ for any vector $x$ of appropriate dimensions we get
\begin{align}
  \nonumber
  H_u^T\diag(\sqrt d)H_u & = \diag(\vc(S_u) \odot (H_u^T \sqrt d))
  = \diag(\vc(S_u) \odot (H_u^T H \vc(\sqrt D))) \\
  \label{eqn:HtH}
  &= \diag(\vc(S_u \odot S_u \odot \sqrt D))
  = \diag(\vc(S_u \odot \sqrt D)).
\end{align}
Use the definitions of $A_1, A_2$ and \eqref{eqn:HtH} to get
\begin{align*}
  \begin{bmatrix}
    B_1 \\
    B_2
  \end{bmatrix}
  =
  \begin{bmatrix}
    \mathbf{1}^T \otimes I \\
    I \otimes \mathbf{1}^T
  \end{bmatrix}
  H_u^T\diag(d)H_u
  &=
  \begin{bmatrix}
    I & \cdots & I \\
    \mathbf{1}^T & & \\
    & \ddots & \\
    & & \mathbf{1}^T
  \end{bmatrix}
  \begin{bmatrix}
    \diag(\sqrt{s}_1) & & \\
    & \ddots & \\
    & & \diag(\sqrt{s}_n)
  \end{bmatrix} \\
  &=
  \begin{bmatrix}
    \diag(\sqrt{s}_1) & \cdots & \diag(\sqrt{s}_n) \\
    \sqrt{s}_1^T & & \\
    & \ddots & \\
    & & \sqrt{s}_n^T
  \end{bmatrix}
\end{align*}
We can then form the symmetric product of this matrix with itself to obtain
\begin{align*}
  \begin{bmatrix}
    B_1\\ B_2
  \end{bmatrix}
  \begin{bmatrix}
    B_1\\ B_2
  \end{bmatrix}^T &=
  \begin{bmatrix}
    \diag(\sqrt{s}_1) & \cdots & \diag(\sqrt{s}_n) \\
    \sqrt{s}_1^T & & \\
    & \ddots & \\
    & & \sqrt{s}_n^T
  \end{bmatrix}
  \begin{bmatrix}
    \diag(\sqrt{s}_1) & \sqrt{s}_1 & & \\
    \vdots & & \ddots & \\
    \diag(\sqrt{s}_n) & & & \sqrt{s}_n\\
  \end{bmatrix} \\
  \intertext{or, noting that $\sqrt{s}_i^T \sqrt{s}_i = \mathbf{1}^T s_i$, $\diag^2(\sqrt{s}_i) = \diag(s_i)$, $\diag(\sqrt{s}_i)\sqrt{s}_i = s_i$ where $s_i$ denotes the $i-$th column of $S_u \odot \mat(d)$,}
  \begin{bmatrix}
    B_1\\ B_2
  \end{bmatrix}
  \begin{bmatrix}
    B_1\\ B_2
  \end{bmatrix}^T &=
  \begin{bmatrix}
    \diag\left(\sum_i s_i\right) & s_1 & \dots & s_n \\
    s_1^T & \mathbf{1}^T s_1 & & \\
    \vdots & & \ddots & \\
    s_n^T & & & \mathbf{1}^T s_n
  \end{bmatrix}
  =
  \begin{bmatrix}
    \diag((S_u \odot D) \mathbf{1}) & (S_u \odot D) \\
    (S_u \odot D) ^T & \diag((S_u \odot D)^T\mathbf{1})
  \end{bmatrix}.
\end{align*}
Thus
\begin{align*}
  A\diag(d)A^T &=
  \begin{bmatrix}
    I & I
  \end{bmatrix}
  \begin{bmatrix}
    B_1\\ B_2
  \end{bmatrix}
  \begin{bmatrix}
    B_1^T B_2^T
  \end{bmatrix}
  \begin{bmatrix}
    I\\
    I
  \end{bmatrix} \\
  &= \diag((S_u \odot D) \mathbf{1}) + (S_u \odot D) +
  (S_u \odot D) ^T + \diag((S_u \odot D)^T\mathbf{1})
  \end{align*}
or, recalling that $D$ is upper triangular,
$$
  A\diag(H_u \vc(D))A^T =
  S \odot (D + D^T) + \diag(S \odot (D + D^T).
  $$

\section{Detailed Results for the SuiteSparse Matrices}
In this Section we provide detailed results for \S 4.3 of the main paper.
\newcommand{\tableheader}{{Problem Name} & {$n$} & {$n_{\textrm{nz}}$} & {$t_{\text{admm}}$} & {$t_{\text{gurobi}}$} \\}
{
  {\sisetup{round-mode=places, scientific-notation=true}
  {
	\begin{longtable}{
		l
  *{2}{S[scientific-notation=false, round-precision=0, table-number-alignment=center, table-column-width=1.5cm]}
  *{2}{S[round-precision=2, table-format=1.2e-2]}
  }
  \caption{Results of Algorithm 1 for Undirected Weighted Graph Matrices from the SuiteSparse collection. A tolerance of $10^{-4}$ is used for the termination of our Algorithm. The timings are in seconds and they are compared against the solution times of Gurobi (on $\mathcal{P}1$). Hardware used: a single thread running on an Intel Gold 5120 with 192GB of memory.}\label{tab:sdplib}\\
  \toprule \tableheader \midrule \endfirsthead
  \caption*{Table \ref{tab:sdplib}: Continued.}\\
	\toprule \tableheader \midrule \endhead
  \csvreader[head to column names]{./data/suitesparse.csv}{}
  {\\
  \name	& \n & \nnz	& \time	& \timeGurobi
  }
  \\\hline
  \end{longtable}}
  }
}
\vfill

\end{document}